\newcommand{\be}{\begin{equation}}
\newcommand{\ee}{\end{equation}}
\newcommand{\dalign}[1]{\[\begin{aligned} #1 \end{aligned}\]}
\title[Comparing zeros of distinct Dirichlet $L$-functions]
{Comparing zeros of\\ distinct Dirichlet $L$-functions}
\author[W.~Banks]{William Banks}
\address{Department of Mathematics, 
         University of Missouri, 
         Columbia MO 65211, USA.}
\email{bankswd@missouri.edu}
\date{\today}
\begin{document}

\begin{abstract}
For any $\theta>\frac13$, we show that there are constants $c_1,c_2>0$ that 
depend only on $\theta$ for which the following property holds.
If $\chi_1,\chi_2$ are two distinct primitive
Dirichlet characters modulo $q$, and $T\ge c_1q^\theta$,
then $L(s,\chi_1)$ and $L(s,\chi_2)$ do not have
the same zeros in the region
$$
\big\{s=\sigma+it\in\C:0<\sigma<1,~T<t<T+c_2q^\theta\log T\big\}.
$$
For cubefree moduli $q$, the same result holds for any $\theta>\frac14$.
\end{abstract}

\thanks{
MSC Primary: 11M06; Secondary: 11M26.}

\thanks{
\textbf{Keywords:} Dirichlet $L$-function, Dirichlet character,
nontrivial zeros.}

\maketitle


\section{Introduction}

For a given Dirichlet character $\chi$ modulo $q$, the
$L$-function $L(s,\chi)$ is defined in the half-plane $\sigma>1$ via the
equivalent definitions
$$
L(s,\chi)\defeq\sum_{n\ge 1} \chi(n)n^{-s}
=\prod_{p\text{~prime}}(1-\chi(p)p^{-s})^{-1}.
$$
If $\chi$ is nonprincipal, then
$L(s,\chi)$ extends to an entire function on the complex plane, and
if $\chi$ is primitive, then $L(s,\chi)$ satisfies
a simple functional equation relating its values at
$s$ and $1-s$. In this note, we prove the following:

\begin{theorem}\label{thm:main1}
For any $\theta>\frac{1}{3}$,
there are constants $c_1,c_2>0$ that depend only on~$\theta$ such that
the following property holds.
Let $\chi_1$ and $\chi_2$ be distinct primitive Dirichlet characters modulo $q$.
Then, for any $T\ge c_1 q^\theta$, the functions $L(s,\chi_1)$ and $L(s,\chi_2)$ 
have different zeros $($counted with multiplicity$\,)$ in the region
$$
\cR(q,T)\defeq\big\{s=\sigma+it\in\C:
0<\sigma<1,~T<t<T+c_2 q^\theta\log T\big\}.
$$
In other words, the function
\be\label{eq:LL}
\frac{L(s,\chi_1)}{L(s,\chi_2)}+\frac{L(s,\chi_2)}{L(s,\chi_1)}
\ee
has at least one pole in $\cR(q,T)$.
Moreover, if $q$ is cubefree, then the same result holds for
any $\theta>\frac14$.
\end{theorem}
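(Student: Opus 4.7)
The plan is to argue by contradiction: suppose that $L(s,\chi_1)$ and $L(s,\chi_2)$ have identical zero sets (counted with multiplicity) throughout $\cR(q,T)$. Then the function
\[
\Delta(s)\defeq\frac{L'(s,\chi_1)}{L(s,\chi_1)}-\frac{L'(s,\chi_2)}{L(s,\chi_2)}
\]
is holomorphic on $\cR(q,T)$ and, on the half-plane $\sigma>1$, coincides with the absolutely convergent Dirichlet series $-\sum_n \Lambda(n)(\chi_1(n)-\chi_2(n))n^{-s}$. I would compare these two descriptions quantitatively via the Guinand--Weil explicit formula and aim for an impossible estimate.

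Concretely, I would pick a Beurling--Selberg majorant or minorant $f$ of the indicator of $[T,T+H]$, with $H=c_2 q^\theta\log T$ and $\hat f$ supported in a symmetric interval $[-\Delta,\Delta]$ for a parameter $\Delta$ to be optimized. Applying the explicit formula to $\chi_1$ and $\chi_2$ separately and subtracting, the Archimedean and $\log q$ contributions cancel (up to an $O(\log qT)$ correction in the case of unequal parities), leaving an identity of the form
\[
\sum_{\gamma_1}f(\gamma_1)-\sum_{\gamma_2}f(\gamma_2)
= -\frac{1}{2\pi}\sum_{n\ge1}\frac{\Lambda(n)}{\sqrt n}\Bigl[(\chi_1-\chi_2)(n)\,\hat f\bigl(\tfrac{\log n}{2\pi}\bigr)+\text{c.c.}\Bigr].
\]
Under the shared-zero hypothesis, the main contributions to the left-hand side cancel, leaving only a Beurling--Selberg boundary error of size $O(\Delta^{-1}\log qT)$; the right-hand side is then a short weighted prime sum, supported on $n\le e^{2\pi\Delta}$ and modulated by the phase $n^{-iT}$.

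The decisive step is to extract a lower bound on this short prime sum that overwhelms the boundary error. Distinctness and primitivity of $\chi_1,\chi_2$ guarantee that $(\chi_1-\chi_2)(p)\ne 0$ for a positive proportion of primes $p$, and for generic $T\ge c_1 q^\theta$ the oscillation $n^{-iT}$ prevents artificial cancellation. The quantitative input is a Burgess-type inequality for the partial character sum $\sum_{n\le N}\chi(n)$, combined with partial summation. Burgess's bound is strongest when its parameter $r$ may be taken arbitrarily large, which is possible precisely for cubefree $q$; this yields nontrivial information all the way down to $N\approx q^{1/4+\varepsilon}$ and matches the claimed threshold $\theta>\tfrac14$. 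For arbitrary $q$ only weaker forms are available, producing the larger exponent $\theta>\tfrac13$. The principal obstacle is the delicate balance between the cutoff $\Delta$, which simultaneously controls the prime-sum length $e^{2\pi\Delta}$ and the boundary error $\Delta^{-1}\log qT$, and the character-sum threshold; arranging these so that the prime-side lower bound just beats the boundary error is precisely what pins down the window height to $H\approx q^\theta\log T$.
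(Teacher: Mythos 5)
Your overall architecture --- compare the two $L$-functions through an explicit formula over the window $[T,T+H]$ and feed in a Burgess-type input to certify that $\chi_1$ and $\chi_2$ genuinely differ on small primes --- is in the right spirit, and your reading of the Burgess exponents ($r\le 3$ in general giving $\theta>\frac13$, arbitrary $r$ for cubefree $q$ giving $\theta>\frac14$) matches what the paper does. But the decisive step, as you yourself half-acknowledge, does not go through. With a real test function $f$ localizing to $[T,T+H]$, the prime side of your identity is essentially $H\sum_{n\le e^{2\pi\Delta}}\Lambda(n)n^{-1/2}\bigl(\chi_1(n)-\chi_2(n)\bigr)n^{-iT}w(n)$ plus its conjugate, and you need a \emph{lower} bound on this for \emph{every} $T\ge c_1q^\theta$, not for generic $T$. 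There is no way to rule out near-total cancellation of this oscillating sum at particular values of $T$; ``the oscillation prevents artificial cancellation'' points in the wrong direction --- the oscillation is exactly what can kill the sum. A genericity argument would at best recover a positive-proportion-of-$T$ statement, which is Fujii's theorem cited in the introduction, not the result claimed here. (Two secondary issues: even at a fixed $T$, lower-bounding a sum of the complex quantities $(\chi_1-\chi_2)(p)$ over many primes requires controlling cancellation among those values, which Burgess plus partial summation does not provide; and the boundary error is $O(\log qT)$ rather than $O(\Delta^{-1}\log qT)$, since $O(\log qT)$ zeros may cluster within distance $1/\Delta$ of the window's endpoints.)

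The paper's fix for the main obstruction is to collapse the prime side to a \emph{single} frequency. Lemma~\ref{lem:Burgess} uses Burgess's bound only to produce one prime $p_0\le c_3q^{\theta'}$ with $|\chi_1(p_0)-\chi_2(p_0)|\ge c_4(\log q)^{-2}$, and Theorem~\ref{thm:main2} (a Landau--Gonek formula for $L(s,\chi)$) is then applied with $x=p_0$: the weighted zero sum $\sum_{T<\gamma<T+\Delta}p_0^{\rho}$ has main term $-\frac{\Delta}{2\pi}\Lambda(p_0)\chi_j(p_0)$, carrying no $T$-dependent phase, with error $O(p_0\log T\,\log\log 2p_0)$. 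Equality of the zero multisets for $j=1,2$ forces $\Delta\log p_0\,|\chi_1(p_0)-\chi_2(p_0)|\ll p_0\log T\,\log\log 2p_0$, which fails once $\Delta\asymp q^\theta\log T/\log q$ with a large enough constant. In effect the correct test function is $p_0^{iu}\mathbf{1}_{(T,T+\Delta)}(u)$ rather than a real majorant of the indicator; that single change removes the obstruction in your argument. If you modify your Beurling--Selberg weight by the factor $p_0^{iu}$ and restrict the prime sum to the single term $n=p_0$, you have essentially reproduced the paper's proof.
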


Some related results have been found previously.
Fujii~\cite[Thm.~$1'$]{Fujii} showed that if $\chi_1$ and $\chi_2$ are
distinct primitive characters to the same modulus $q$,
then a positive proportion of the nontrivial zeros
of $L(s,\chi_1)L(s,\chi_2)$ are poles of the function \eqref{eq:LL}.
Moreover, it follows from \cite[\S3 Cor.\,$(i)$]{Fujii} that
$L(s,\chi_1)$ and $L(s,\chi_2)$ have at least $c_3\log T$
such zeros in the region $\cR(q,T)$ for a positive proportion of
$T\in(T_1,T_1+T_1^{1/2+\eps})$ if
$T_1\ge c_4$ and $H>T_1^{1/2+\eps}$, where $c_3,c_4>0$ are
constants that depend only $q$ and $\eps$;
this result also holds generally for distinct Dirichlet
\text{$L$-functions} associated to primitive Dirichlet characters whose
moduli may be different (see Fujii \cite[\S 4]{Fujii}). Thus, Fujii's method
produces more such zeros than Theorem~\ref{thm:main1} in short rectangles of the form
$\{0<\sigma<1,~T_1<t<T_1+H\}$, but the constants are ineffective
in $q$ and $\eps$. By contrast, Theorem~\ref{thm:main1} produces fewer zeros in $\cR(q,T)$
on average, but the holds for all  large $T$.

Using a variant of Hamburger's theorem,
Raghunathan~\cite[Thm.~3]{Ragun} showed that for any pair of
distinct primitive characters $\chi_1$ and $\chi_2$, both functions
$$
\frac{L(s,\chi_1)}{L(s,\chi_2)}\mand\frac{L(s,\chi_2)}{L(s,\chi_1)}
$$
have infinitely many non-integer poles.

Our proof of Theorem~\ref{thm:main1} is based on a generalization of
a striking formula due to Landau~\cite[Satz~1]{Landau},
which asserts that for any fixed $x>1$ one has
\be\label{eq:magic}
\ssum{\rho=\beta+i\gamma\\1<\gamma\le T}x^\rho
=-\frac{T}{2\pi}\Lambda(x)\ind{\Z}(x)+O(\log T),
\ee
where the sum runs over the multiset of nontrivial zeros $\rho=\beta+i\gamma$
of the Riemann zeta function $\zeta(s)$ with $1<\gamma\le T$,
$\Lambda$ is von Mangoldt function, and $\ind{\Z}$ is the indicator
function of the integers:
$$
\ind{\Z}(x)\defeq
\begin{cases}
1&\quad\hbox{if $x\in\Z$},\\
0&\quad\hbox{otherwise}.
\end{cases}
$$
Gonek~\cite[Thm.~1]{Gonek} gave a uniform version of Landau's
result \eqref{eq:magic} with an error term expressed in terms of both
variables $x$ and $T$. Some stronger
versions of the Gonek-Landau theorem are known; see
\cite{FordZahar,FujiiLand1,FujiiLand2}.
The following theorem is an extension of \cite[Thm.~1]{Gonek},
which is proved using Gonek's methods, covering the case of an $L$-function
attached to a primitive Dirichlet character.

\begin{theorem}\label{thm:main2}
Let $\chi$ be a primitive Dirichlet character modulo $q$.
Uniformly for $x\ge 2$ and $T_2>T_1>1$, we have
\be\label{eq:LandauDirichlet}
\ssum{\rho=\beta+i\gamma\\T_1<\gamma<T_2}x^\rho
=-\frac{T_2-T_1}{2\pi}\Lambda(x)\chi(x)\ind{\Z}(x)+O(E),
\ee
where the sum runs over the nontrivial zeros $\rho=\beta+i\gamma$
of $L(s,\chi)$ for which $T_1<\gamma<T_2$ $($each zero being summed
according to its multiplicity$)$, 
$$
E\defeq x\log x\,\log\log 2x
+x\log x\,\min\Big\{\frac{T_2}{x},\frac{1}{\langle x\rangle}\Big\}
+x\,\log\log 2x\,\log 2qT_2,
$$
and $\langle x\rangle$ is the distance from $x$ to the nearest
prime power other than $x$ itself.
The implied constant in \eqref{eq:LandauDirichlet} is absolute.
\end{theorem}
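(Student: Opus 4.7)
The plan is to adapt Gonek's contour-integration technique from \cite{Gonek} to primitive Dirichlet $L$-functions. Set $c\defeq 1+1/\log x$ and let $\mathcal{K}$ be the positively oriented rectangular contour with vertices $c+iT_1$, $c+iT_2$, $1-c+iT_2$, $1-c+iT_1$. Since $\chi$ is primitive, $L(s,\chi)$ is entire when $q>1$; when $q=1$ its only pole $s=1$ lies below $\mathcal{K}$ because $T_1>1$. The poles of $\frac{L'}{L}(s,\chi)\,x^s$ inside $\mathcal{K}$ are therefore exactly the nontrivial zeros of $L(s,\chi)$ with $T_1<\gamma<T_2$, each contributing $x^\rho$ with multiplicity, so Cauchy's theorem yields
\[
\sum_{T_1<\gamma<T_2}x^\rho=\frac{1}{2\pi i}\oint_{\mathcal{K}}\frac{L'}{L}(s,\chi)\,x^s\,ds,
\]
and it remains to estimate the four edges.

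On the right edge, insert the absolutely convergent Dirichlet series $\frac{L'}{L}(s,\chi)=-\sum_n\Lambda(n)\chi(n)n^{-s}$ and integrate term by term: the diagonal term $n=x$ (nonzero only when $x$ is a prime power with $\gcd(x,q)=1$) produces the main term $-\Lambda(x)\chi(x)\mathbf{1}_{\mathbb{Z}}(x)(T_2-T_1)/(2\pi)$, while the off-diagonal terms are controlled by the complementary bounds on $\int_{c+iT_1}^{c+iT_2}(x/n)^s\,ds$ — the trivial bound of size $(T_2-T_1)(x/n)^c$ versus the antiderivative bound of size $(x/n)^c/|\log(x/n)|$. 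Using $\inf_{n\neq x}|\log(x/n)|\asymp\langle x\rangle/x$ for the nearest prime power, the off-diagonal sum accounts for the $x\log x\,\log\log 2x$ and $x\log x\cdot\min\{T_2/x,1/\langle x\rangle\}$ parts of $E$. The left edge at $\Re s=1-c$ is then handled via the functional equation $L(s,\chi)=W(\chi)q^{1/2-s}X(s)L(1-s,\bar\chi)$: logarithmic differentiation, together with the Stirling bound $X'/X(s)\ll\log(q(|t|+2))$, converts the integral into one involving $L'/L(1-s,\bar\chi)$ on the absolutely convergent line $\Re s=c$. A further diagonal/off-diagonal split contributes $O(x\log\log 2x\,\log 2qT_2)$ to $E$, and is the only place where the $\log q$ factor enters $E$.

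The horizontal edges at $t=T_j$ are estimated using the standard partial-fraction identity $\frac{L'}{L}(s,\chi)=\sum_{|\gamma-t|\le 1}(s-\rho)^{-1}+O(\log q(|t|+2))$ together with the $q$-uniform Riemann--von Mangoldt formula $N(T,\chi)=\frac{T}{2\pi}\log\frac{qT}{2\pi e}+O(\log qT)$ to control zero-clusters near each $T_j$. This is where the main obstacle lies: because the theorem demands uniformity in the pair $(T_1,T_2)$, we cannot simply relocate the horizontal lines to avoid zeros, and the contour integrals must be controlled \emph{through} the zero-clusters rather than around them. Executing this so as to lose only a factor of $\log\log 2x$ — rather than the full $\log qT$ produced by a naive shift argument — is the delicate step that Gonek treats for $\zeta$, and transplanting it to $L(s,\chi)$ requires consistent use of the $q$-uniform zero-density and growth bounds for $L(s,\chi)$ in the critical strip. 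Combining the four edge estimates and rearranging assembles the identity \eqref{eq:LandauDirichlet}.
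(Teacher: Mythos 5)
Your proposal follows essentially the same route as the paper: the same rectangle with abscissae $c=1+1/\log x$ and $1-c$, the Dirichlet series on the right edge yielding the main term plus off-diagonal terms controlled by Gonek's lemma (Lemma~\ref{lem:G}), the functional equation on the left edge, and the Montgomery--Vaughan partial-fraction formula together with Gonek's $x\log\log 2x$ integral estimate on the horizontal edges. The only slip is attributional: in the paper the term $x\,\log\log 2x\,\log 2qT_2$ arises from the horizontal edges $I_2,I_4$, while the left edge contributes only $O(\log 2qT_2)$ (since $x^{1-c}\ll 1$ and the integrand there involves $(nx)^{it}$, which has no diagonal term), so your claim that the left edge is the sole source of the $\log q$ factor is inaccurate but harmless, as every bound you state still fits inside $E$.
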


We remark that similar results have been achieved covering
all functions in the Selberg class (including the Dirichlet $L$-functions
considered here); see, for example, \cite{SSS,FordSoundZahar,MurtyPerelli,MurtyZahar}.
For our purposes, the error term provided
by Theorem~\ref{thm:main2} is convenient as the dependence on 
the quantity $\langle x\rangle$ is explicit.

Theorem~\ref{thm:main2} is proved in \S\ref{proof main 2} and
is used in the proof of Theorem~\ref{thm:main1} (see \S\ref{proof main 1}).

\section{Preliminaries}

\begin{lemma}\label{lem:G}
For $x\ge 2$, $T>1$, and $c\defeq 1+\tfrac{1}{\log x}$, we have
$$
\ssum{n\ge 2\\n\ne x}\frac{\Lambda(n)}{n^c}
\min\Big\{T,\frac{1}{|\log(x/n)|}\Big\}\ll\log x\,\log\log 2x
+\log x\,\min\Big\{\frac{T}{x},\frac{1}{\langle x\rangle}\Big\}.
$$
\end{lemma}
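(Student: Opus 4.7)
The plan is to split the summation range according to how far $n$ is from $x$. For $|n - x| \geq x/2$, the factor $|\log(x/n)|$ is bounded below by $\log 2$, so the minimum contributes $O(1)$; combined with the elementary estimate $\sum_{n \geq 2} \Lambda(n) n^{-c} = -\zeta'(c)/\zeta(c) \ll 1/(c-1) = \log x$, this ``far'' range gives $O(\log x)$, which is absorbed into the target bound.

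The substantive work concerns the ``near'' range $x/2 < n < 2x$ with $n = p^k \neq x$. Here I would use the inequality $|\log(x/n)| \geq |n-x|/(4x)$, together with the crude estimates $n^c \asymp x$ and $\Lambda(n) \leq 2\log x$, to reduce the problem to showing
\[
S \defeq \sum_{\substack{p^k \in (x/2,\,2x) \\ p^k \neq x}} \min\Bigl\{T,\;\frac{4x}{|p^k - x|}\Bigr\} \ll x \log\log 2x + x \min\Bigl\{\frac{T}{x},\;\frac{1}{\langle x\rangle}\Bigr\}.
\]

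To bound $S$, first extract the nearest prime power (at distance $\langle x\rangle$), which contributes at most $\min\{T,\,4x/\langle x\rangle\} \ll x\min\{T/x,\,1/\langle x\rangle\}$, matching the second target term. For the remaining terms I would decompose by dyadic distance $|p^k - x| \in [2^j,\,2^{j+1})$, bounding the number of primes per level by the Brun--Titchmarsh inequality $\pi(x+y) - \pi(x) \ll y/\log y$ (valid for $y \geq 2$), and noting that the $O(\sqrt{x})$ proper prime powers in $(x/2,2x)$ contribute negligibly overall. Level $j$ then contributes $O(x/j)$ to the portion where the $4x/|p^k-x|$ bound is active, and summing over $j$ up to $J \asymp \log x$ gives the harmonic total $\sum_j 1/j = O(\log\log x)$, hence $O(x \log\log 2x)$. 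In the complementary range of small $j$ (namely $2^j \leq 4x/T$) where the truncation at $T$ takes effect, the Brun--Titchmarsh count $O(2^j/\log 2^j + 1)$ multiplied by $T$ yields a bound compatible with both $O(x\log\log 2x)$ and $O(T)$.

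The main obstacle will be keeping these estimates sharp uniformly across all regimes of $T$ and $\langle x\rangle$; in particular, one must handle the very short dyadic ranges near $d \asymp 1$ (where Brun--Titchmarsh is inapplicable but the trivial bound of $O(1)$ prime powers per level suffices) and verify that the transition between the $T$-truncated and $x/|p^k-x|$-truncated regimes does not produce spurious logarithmic losses. Once all pieces are assembled and the outer factor of $O(\log x / x)$ is reinstated, the stated bound follows.
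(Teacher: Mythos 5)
The paper does not actually prove this lemma; it simply cites Gonek~\cite[Lem.~2]{Gonek}, and your sketch is essentially a reconstruction of Gonek's own argument: split at $|n-x|\ge x/2$, use $-\zeta'(c)/\zeta(c)\ll 1/(c-1)=\log x$ for the far range, linearize the logarithm in the near range, extract the nearest prime power to produce the $\min\{T/x,1/\langle x\rangle\}$ term, and run a dyadic decomposition with Brun--Titchmarsh, the harmonic sum $\sum_{j\le J}1/j$ with $J\asymp\log x$ being exactly the source of the $\log\log 2x$. All of that is correct, including the reductions $n^c\asymp x$, $\Lambda(n)\ll\log x$, and $|\log(x/n)|\gg|n-x|/x$ on $x/2<n<2x$.

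The one step you state too casually is the treatment of proper prime powers. ``There are $O(\sqrt{x})$ of them, so they contribute negligibly'' does not follow as written: a proper prime power at distance $d\asymp 1$ from $x$ contributes $\asymp x$ to your sum $S$, so the crude bound $O(\sqrt{x})\cdot O(x)=O(x^{3/2})$ is far too large, and counting them trivially per dyadic level (at most $O(2^j)$ integers, each contributing $4x/2^j$) gives $O(x)$ per level and hence $O(x\log x)$ in total, which also exceeds the target. The fix is easy but needs to be said: distinct prime squares in $(x/2,2x)$ are spaced $\gg\sqrt{x}$ apart (and higher powers even more so, with only $O(\log x)$ relevant exponents), so the number of proper prime powers at distance $\le D$ from $x$ is $O(\min\{D,\,D/\sqrt{x}+\log x\})$; using the first bound for levels with $2^j\le\log x$ (which contributes $O(x\log\log x)$ over the $O(\log\log x)$ such levels) and the second for the rest (which contributes $O(\sqrt{x}\log x+x)$) closes the gap. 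With that repair, and noting that the $T$-truncation is in fact only needed for the single nearest prime power, your argument is complete and matches the cited proof.
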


\begin{proof}
See Gonek~\cite[Lem.~2]{Gonek}.
\end{proof}

\begin{lemma}\label{lem:MV}
Let $\chi$ be a primitive character modulo $q\ge 1$. Then the estimate
$$
\frac{L'}{L}(s,\chi)=\ssum{\rho\\|\gamma-t|<1}\frac{1}{s-\rho}
+O\big(\log 2q|t|\big)
$$
holds uniformly for $-1\le\sigma\le 2$ and $|t|\ge 1$.
\end{lemma}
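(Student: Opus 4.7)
The plan is to derive the claim from the Hadamard product representation of the completed $L$-function attached to $\chi$, then subtract the expansion at the reference point $s_0=2+it$ to cancel the constants. Concretely, if $\chi(-1)=(-1)^{\mathfrak a}$ with $\mathfrak a\in\{0,1\}$, the entire function
\[
\xi(s,\chi)\defeq\bigl(q/\pi\bigr)^{(s+\mathfrak a)/2}\Gamma\!\bigl((s+\mathfrak a)/2\bigr)L(s,\chi)
\]
has order $1$ and admits a factorization $\xi(s,\chi)=e^{A(\chi)+B(\chi)s}\prod_\rho(1-s/\rho)e^{s/\rho}$ over the nontrivial zeros. Taking $(\log\xi)'$ and isolating $L'/L$ gives an identity of the form
\[
\frac{L'}{L}(s,\chi)=-\tfrac12\log(q/\pi)-\tfrac12\tfrac{\Gamma'}{\Gamma}\!\bigl((s+\mathfrak a)/2\bigr)+B(\chi)+\sum_\rho\Bigl(\tfrac{1}{s-\rho}+\tfrac{1}{\rho}\Bigr).
\]

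Next I would evaluate this identity at $s$ and at $s_0=2+it$ and subtract, which eliminates the unknown constant $B(\chi)$ and the terms $1/\rho$. Since the Dirichlet series for $-L'/L$ converges absolutely on $\Re s=2$, the value $(L'/L)(2+it,\chi)$ is $O(1)$. The gamma contribution is handled by Stirling: for $-1\le\sigma\le 2$ and $|t|\ge 1$ one has $\Gamma'/\Gamma((s+\mathfrak a)/2)=O(\log(|t|+2))$, and the same bound holds at $s_0$. This reduces the matter to showing
\[
\sum_\rho\Bigl(\frac{1}{s-\rho}-\frac{1}{s_0-\rho}\Bigr)
=\sum_{|\gamma-t|<1}\frac{1}{s-\rho}+O(\log 2q|t|).
\]

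To finish, I would split the left-hand sum according to whether $|\gamma-t|<1$ or $|\gamma-t|\ge 1$. For the close zeros, the auxiliary terms $1/(s_0-\rho)$ are each $O(1)$ because $|s_0-\rho|\ge 2-\beta\ge 1$, and the standard zero-density estimate
\[
N(T+1,\chi)-N(T,\chi)\ll\log 2q(|T|+2)
\]
bounds the number of such zeros by $O(\log 2q|t|)$. For $|\gamma-t|\ge 1$, the numerical inequality
\[
\Bigl|\tfrac{1}{s-\rho}-\tfrac{1}{s_0-\rho}\Bigr|=\frac{|2-\sigma|}{|s-\rho|\,|s_0-\rho|}\ll\frac{1}{|\gamma-t|^2}
\]
combines with the same zero-counting estimate (applied dyadically over annuli $k\le|\gamma-t|<k+1$) to give a total contribution of $O(\log 2q|t|)$. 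Assembling everything produces the stated formula.

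The main obstacle, if any, is the zero-counting input: the proof depends on having the estimate $N(T+1,\chi)-N(T,\chi)\ll\log 2qT$ available for primitive $\chi$. This is standard (it follows from the functional equation and Jensen's inequality applied to $\xi(s,\chi)$), so I would either cite it from Davenport or Montgomery--Vaughan or sketch its derivation in parallel; everything else in the argument is a routine application of Stirling and the Hadamard factorization.
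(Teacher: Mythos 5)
Your argument is correct and is precisely the standard proof of this estimate (Hadamard factorization of $\xi(s,\chi)$, subtraction at $s_0=2+it$, Stirling, and the zero-counting bound $N(T+1,\chi)-N(T,\chi)\ll\log 2q(|T|+2)$), which is exactly what the paper invokes by citing Montgomery--Vaughan, Lemmas 12.1 and 12.6. The only detail worth adding is the degenerate case $q=1$, where $L(s,\chi)=\zeta(s)$ and the completed function needs the extra factor $s(s-1)$ before it is entire; the resulting terms $1/s+1/(s-1)$ are $O(1)$ for $|t|\ge 1$, so the conclusion is unaffected.
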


\begin{proof}
See Montgomery and Vaughan \cite[Lems.\ 12.1 and 12.6]{MontVau}.
\end{proof}

\begin{lemma}\label{lem:Burgess}
For any $\theta>\frac{1}{3}$,
there are constants $c_3,c_4>0$ that depend only on~$\theta$ such that
the following property holds. If $\chi$ is a nonprincipal character modulo $q\ge 3$,
then there exists a prime $p_0$ such that
\be\label{eq:claim1}
p_0\le c_3 q^\theta
\mand
\big|\chi(p_0)-1\big|\ge\frac{c_4}{(\log q)^2}.
\ee
If $q$ is cubefree, then the same result holds for
any $\theta>\frac14$.
\end{lemma}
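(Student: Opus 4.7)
The plan is to combine Burgess's character sum bound with a multiplicativity argument, then derive a contradiction. First, I would recall Burgess's theorem: for any nonprincipal character $\chi$ modulo $q$ and any integer $r\ge 1$,
\[
\Big|\sum_{n\le N}\chi(n)\Big|\ll_r N^{1-1/r}\,q^{(r+1)/(4r^2)+\epsilon},
\]
which is known unconditionally for $r\in\{1,2,3\}$ and, when $q$ is cubefree, for every $r\ge 1$. Taking $r=3$ in the general case or $r$ large in the cubefree case, and choosing $\epsilon$ small enough, yields for any $\theta>\tfrac13$ (resp.\ $\theta>\tfrac14$) a constant $\delta=\delta(\theta)>0$ such that
\[
\Big|\sum_{n\le H}\chi(n)\Big|\ll Hq^{-\delta},\qquad H:=c_3 q^\theta.
\]

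I would then argue by contradiction. Suppose every prime $p\le H$ satisfies $|\chi(p)-1|<c_4/(\log q)^2$. Since $|\chi(p)-1|=1$ whenever $p\mid q$, the hypothesis forces every prime up to $H$ to be coprime to $q$ (once $c_4$ is small and $q$ is large). For any $n\le H$ with $(n,q)=1$, factoring $n=\prod p_i^{a_i}$ and combining $|\chi(p_i)^{a_i}-1|\le a_i|\chi(p_i)-1|$ with the telescoping inequality $|\prod z_i-1|\le\sum|z_i-1|$ (valid when $|z_i|\le 1$) gives
\[
|\chi(n)-1|\le\Omega(n)\,\frac{c_4}{(\log q)^2}\ll\frac{c_4}{\log q},
\]
using $\Omega(n)\le(\log n)/\log 2\ll\log q$ on the range $n\le H$.

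Summing, and invoking $\#\{n\le H:(n,q)=1\}=H\phi(q)/q+O(2^{\omega(q)})$ together with the Mertens bound $\phi(q)/q\gg(\log\log q)^{-1}$, I would obtain
\[
\Big|\sum_{n\le H}\chi(n)\Big|\ge\frac{cH}{\log\log q}-O\Big(\frac{c_4 H}{\log q}\Big)-O(q^\epsilon),
\]
which for $c_4$ sufficiently small and $q\ge q_0(\theta)$ is at least $c'H/\log\log q$. This contradicts the Burgess upper bound $\ll Hq^{-\delta}$, since $q^\delta$ eventually dwarfs $\log\log q$. The finitely many remaining $q<q_0(\theta)$ are handled separately: for each nonprincipal character modulo such a $q$ there exists some prime $p$ with $\chi(p)\neq 1$, and $c_3,c_4$ can be shrunk to absorb this bounded set.

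The main obstacle will be calibrating the two small parameters. The exponent $2$ in $(\log q)^{-2}$ is sharp in the following sense: only at that strength does multiplication by $\Omega(n)\ll\log q$ leave a phase drift in $\chi(n)$ of size $O(c_4/\log q)$, still smaller than the density $\phi(q)/q\gg(\log\log q)^{-1}$ of integers contributing to the sum. Everything then hinges on Burgess delivering a genuine power saving $q^{-\delta}$ rather than merely a logarithmic one, which is precisely what dictates the thresholds $\theta>\tfrac13$ (general) and $\theta>\tfrac14$ (cubefree) in the statement.
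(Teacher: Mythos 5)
Your proposal is correct and follows essentially the same route as the paper: apply Burgess's bound with $r=3$ (or $r$ large in the cubefree case) and a small $\epsilon$ to get a power saving over $H=c_3q^\theta$, then argue by contradiction that if every prime $p\le H$ had $|\chi(p)-1|<c_4/(\log q)^2$, multiplicativity and $\Omega(n)\ll\log q$ would force $\chi(n)=1+O(c_4/\log q)$ for all $n\le H$ and hence $|\sum_{n\le H}\chi(n)|\gg H$, which is incompatible with the Burgess estimate. Your version merely spells out a few details the paper leaves implicit (the primes dividing $q$, the telescoping inequality, the treatment of small $q$), and the detour through $\phi(q)/q\gg(\log\log q)^{-1}$ is harmless but unnecessary, since under the contradiction hypothesis every $n\le H$ is automatically coprime to $q$.
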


\begin{proof}
It suffices to prove this when $q$ is sufficiently large (depending on $\theta$).

The well known results of Burgess~\cite{Burgess1,Burgess2,Burgess3}
state that the bound
\be\label{eq:burgess}
\sum_{N<n\le N+H}\chi(n)\mathop{\,\ll\,}\limits_{\eps,r}
H^{1-1/r}q^{(r+1)/(4r^2)+\eps}
\ee
holds for $N\in\R$, $H\ge 1$, $\eps>0$, and any
integer $r\ge 2$, provided that $q$ is cubefree or that $r\le 3$;
the implied constant depends only on $\eps$ and $r$.
Applying \eqref{eq:burgess}
with $N\defeq 0$, $H\defeq c_3 q^\theta$, $\eps\defeq\frac14(\theta-\frac13)$,
and $r\defeq 3$, we have
\be\label{eq:burgess-arbitrary}
\sum_{n\le c_3 q^\theta}\chi(n)\mathop{\,\ll\,}\limits_\theta
q^{\theta-\frac{1}{12}(\theta-\frac13)},
\ee
where the implied constant depends only on $\theta$.

On the other hand, assume there is no prime $p_0$ satisfying
\eqref{eq:claim1}. Then  $\chi(p_0)=1+O_\theta((\log q)^{-2})$ holds
for all primes $p_0\le c_3 q^\theta$, and thus
$\chi(n)=1+O_\theta((\log q)^{-1})$ holds for all natural numbers
$n\le c_3 q^\theta$ since each $n$ has at most $O_\theta(\log q)$ prime factors. Consequently,
$$
\sum_{n\le c_3 q^\theta}\chi(n)\mathop{\,\gg\,}\limits_\theta q^\theta;
$$
this is impossible in view of \eqref{eq:burgess-arbitrary}. Thus, the
assumption is erroneous, and the lemma follows for general $q$.

When $q$ is cubefree, we apply
\eqref{eq:burgess} with the choices $N\defeq 0$,
$H\defeq c_3 q^\theta$, and
$\eps\defeq\frac{1}{8r}(\theta-\frac14)-\frac{1}{4r^2}$,
where the integer $r$ is large enough so that $\eps>0$.
In this case, we have
\be\label{eq:burgess-cubefree}
\sum_{n\le c_3 q^\theta}\chi(n)\mathop{\,\ll\,}\limits_\theta
q^{\theta-\frac{7}{8r}(\theta-\frac14)},
\ee
and the argument can be completed as above.
\end{proof}

\section{Proof of Theorem~\ref{thm:main2}}
\label{proof main 2}

Our proof closely parallels that of Gonek~\cite[Thm.~1]{Gonek}.

We can assume without loss of generality that neither $T_1$ nor $T_2$
is the ordinate of a zero of $L(s,\chi)$.
Put $c\defeq 1+\tfrac{1}{\log x}$ and denote by
$\cC$ the oriented rectangle:
$$
c+iT_1
~~\longrightarrow~~c+iT_2
~~\longrightarrow~~1-c+iT_2
~~\longrightarrow~~1-c+iT_1
~~\longrightarrow~~c+iT_1.
$$
By Cauchy's theorem we have
\dalign{
\ssum{\rho=\beta+i\gamma\\T_1<\gamma<T_2}x^\rho
&=\frac{1}{2\pi i}\bigg(\int_{c+iT_1}^{c+iT_2}
+\int_{c+iT_2}^{1-c+iT_2}
+\int_{1-c+iT_2}^{1-c+iT_1}
+\int_{1-c+iT_1}^{c+iT_1}\bigg)
\frac{L'}{L}(s,\chi)\,x^s\,ds\\
&=I_1+I_2+I_3+I_4\quad\text{(say)}.
}
We estimate the four integrals separately.

In the half-plane $\sigma>1$ we have
\be\label{eq:L'LDirSer}
\frac{L'}{L}(s,\chi)=-\sum_{n\ge 2}\frac{\Lambda(n)\chi(n)}{n^s},
\ee
and therefore
\dalign{
I_1&=\frac{1}{2\pi}\int_{T_1}^{T_2}\frac{L'}{L}(c+it,\chi)\,x^{c+it}\,dt
=-\frac{x^c}{2\pi}\sum_{n\ge 2}\frac{\Lambda(n)\chi(n)}{n^c}
\int_{T_1}^{T_2}\Big(\frac{x}{n}\Big)^{it}\,dt\\
&=-\frac{T_2-T_1}{2\pi}\Lambda(x)\chi(x)\ind{\Z}(x)
+O\bigg(x^c\ssum{n\ge 2\\n\ne x}\frac{\Lambda(n)}{n^c}
\min\Big\{T_2,\frac{1}{|\log(x/n)|}\Big\}\bigg).
}
Applying Lemma~\ref{lem:G} and taking into account that
$x^c\ll x$, we see that
\be\label{eq:I1result}
I_1=-\frac{T_2-T_1}{2\pi}\Lambda(x)\chi(x)\ind{\Z}(x)
+O\bigg(\! x\log x\bigg(\!\log\log 2x
+\min\Big\{\frac{T_2}{x},\frac{1}{\langle x\rangle}\Big\}\bigg)\!\bigg).
\ee

Next, using Lemma~\ref{lem:MV} we have
$$
I_2=\frac{1}{2\pi i}\ssum{\rho\\|\gamma-T_2|<1}
\int_{c+iT_2}^{1-c+iT_2}\frac{x^s\,ds}{s-\rho}
+O\bigg(\log 2qT_2\,\int_{1-c}^cx^{\sigma}\,d\sigma\bigg),
$$
where the sum runs over the zeros $\rho=\beta+i\gamma$
of $L(s,\chi)$ for which $|\gamma-T_2|<1$, each zero being summed
according to its multiplicity. Since $x^c\ll x$, the error term
is $O(x\log 2qT_2/\log x)$, and arguing as in \cite[p.\,402]{Gonek}
we have for each zero $\rho$ in the sum:
$$
\int_{c+iT_2}^{1-c+iT_2}\frac{x^s\,ds}{s-\rho}
\ll x\,\log\log 2x.
$$
Since there are at most $O(\log 2qT_2)$ such zeros, we obtain that
\be\label{eq:I2result}
I_2\ll x\,\log\log 2x\,\log 2qT_2.
\ee
Similarly,
\be\label{eq:I4result}
I_4\ll x\,\log\log 2x\,\log 2qT_1.
\ee

Finally, we come to
\be\label{eq:I3remembered}
I_3=-\frac{1}{2\pi}\int_{T_1}^{T_2}\frac{L'}{L}(1-c+it,\chi)\,x^{1-c+it}\,dt.
\ee
We recall the functional equation
(cf.\ \cite[Eq.~(10.35)]{MontVau}):
\be\label{eq:funcl eqn}
\frac{L'}{L}(s,\chi)=-\frac{L'}{L}(1-s,\overline\chi)-\log\frac{q}{2\pi}
-\frac{\Gamma'}{\Gamma}(1-s)
+\frac{\pi}{2}\cot\frac{\pi}{2}(s+\kappa)
\ee
where
$$
\kappa=\kappa(\chi)\defeq\begin{cases}
0&\quad\text{if $\chi(-1)=1$},\\
1&\quad\text{if $\chi(-1)=-1$}.\\
\end{cases}
$$
Setting $s\defeq 1-c+it$ for some $t\in(T_1,T_2)$,
and using the estimate \cite[Eq.~(C.17)]{MontVau} for the digamma function $\Gamma'/\Gamma$,
we derive that
\dalign{
-\frac{L'}{L}(1-c+it,\chi)&=\frac{L'}{L}(c-it,\overline\chi)
+\log\frac{q}{2\pi}+\frac{\Gamma'}{\Gamma}(c-it)-
\tfrac{\pi}{2}\cot\tfrac{\pi}{2}(1-c+\theta+it)\\
&=\frac{L'}{L}(c-it,\overline\chi)
+\log\frac{q\,t}{2\pi}+O(t^{-1}),
}
where the implied constant is absolute. Inserting this estimate
into \eqref{eq:I3remembered}, and taking into account that
$x^{1-c}\ll 1$, it follows that
$$
I_3=
\frac{x^{1-c}}{2\pi}\int_{T_1}^{T_2}
\frac{L'}{L}(c-it,\overline\chi)\,x^{it}\,dt
+O\(\frac{\log 2qT_2}{\log x}+\log T_2\).
$$
Using \eqref{eq:L'LDirSer}, the first term is
\dalign{
-\frac{x^{1-c}}{2\pi}\sum_{n\ge 2}\frac{\Lambda(n)\chi(n)}{n^c}
\int_{T_1}^{T_2} (nx)^{it}\,dt
&\ll\sum_{n\ge 2}\frac{\Lambda(n)|\chi(n)|}{n^c\log nx}\\
&\ll-\frac{\zeta'}{\zeta}(c)\frac{1}{\log x}\ll\frac{1}{(c-1)\log x}=1.
}
Therefore,
\be\label{eq:I3result}
I_3\ll\log 2qT_2.
\ee

Combining \eqref{eq:I1result}, \eqref{eq:I2result}, \eqref{eq:I4result},
and \eqref{eq:I3result}, the theorem follows.

\section{Proof of Theorem~\ref{thm:main1}}
\label{proof main 1}

We prove only the first result. The extension of the
theorem for cubefree moduli is proved using a similar
argument.

In what follows, all constants $c_j$ are positive and
depend only on $\theta$. Put
$\theta'\defeq\frac12(\theta+\frac13)$,
so $\theta'$ depends only on $\theta$,
and $\theta>\theta'>\frac13$.

Applying Lemma~\ref{lem:Burgess} to the nonprincipal character
$\chi\defeq\chi_1\overline\chi_2$, it follows that
there are constants $c_3,c_4$ and a prime $p_0\ge 3$ such that
\be\label{eq:claim2}
p_0\le c_3 q^{\theta'}
\mand
|\chi_1(p_0)-\chi_2(p_0)|\ge\frac{c_4}{(\log q)^2}.
\ee

Let $T\ge c_1 q^\theta$, where
$c_1\ge 1$; note that $\log qT\ll\log T$.
Let $\Delta$ be any real number such that $0<\Delta<T$.
For $j=1$ or $2$, let $\cS_j(T)$ be the multiset of
zeros $\rho=\beta+i\gamma$ of $L(s,\chi_j)$ with $T<\gamma<T+\Delta$,
where the number of times each zero occurs in $\cS_j(T)$ is the same as
its multiplicity. Applying Theorem~\ref{thm:main2} with
$x\defeq p_0$, $T_1\defeq T$, and $T_2\defeq T+\Delta$, we have
\be\label{eq:junk}
\ssum{\rho\in\cS_j(T)}p_0^\rho
=-\frac{\Delta\log p_0}{2\pi}\chi_j(p_0)+O(p_0\log T\,\log\log 2p_0)
\qquad(j=1\text{~or~}2)
\ee
(here we have used the fact that $\langle p_0\rangle\ge 1$ and that $\log qT\ll\log T$).

Now suppose that $\cS_1(T)$ and $\cS_2(T)$ are equal (as multisets).
Taking the difference of the estimates in \eqref{eq:junk}, we get that
$$
\frac{\Delta\log p_0}{2\pi}\big(\chi_1(p_0)-\chi_2(p_0)\big)
\ll p_0\log T\,\log\log 2p_0.
$$
Using \eqref{eq:claim2} and the bound
$q^{\theta'}(\log q)^3\ll_\theta q^\theta$, we derive that
$$
\Delta\mathop{\,\ll\,}\limits_\theta q^{\theta'}
(\log q)^2\log T\,\frac{\log\log 2p_0}{\log p_0}
\mathop{\,\ll\,}\limits_\theta \frac{q^\theta\log T}{\log q}.
$$
But this is plainly impossible if we define
$$
\Delta\defeq \frac{c_2 q^\theta\log T}{\log q}
$$
for some suitably large choice of $c_2$. With this choice of $\Delta$,
the contradiction implies that either $\Delta\ge T$,
 or else $\cS_1(T)\ne\cS_2(T)$.
However, for a sufficiently large constant $c_1$, we have
$$
\frac{c_2 q^\theta\log T}{\log q}
<T\qquad(T\ge c_1 q^\theta).
$$
In other words, $\Delta<T$, and thus $\cS_1(T)\ne\cS_2(T)$.
The theorem is proved.

\bigskip\subsection{Acknowledgments}
The author thanks Steve Gonek,
Athanasios Sourmelidis, and Alexandru Zaharescu for helpful 
comments and for pointing out some relevant literature.
The author also thanks the referees for numerous valuable comments, which
led to substantial improvements.
For example, Theorem~\ref{thm:main1} was first
established by the author only for $\theta>2$. The suggestion of using
Lemma~\ref{lem:Burgess} to sharpen that theorem came from a referee, and
this contribution is gratefully acknowledged.


\begin{thebibliography}{99}

\bibitem{Burgess1}
D.~A.~Burgess,
On character sums and $L$-series.
\emph{Proc.\ London Math.\ Soc.\ (3)} 12 (1962), 193--206.

\bibitem{Burgess2}
D.~A.~Burgess,
On character sums and $L$-series. II.
\emph{Proc.\ London Math.\ Soc.\ (3)} 13 (1963), 524--536.

\bibitem{Burgess3}
D.~A.~Burgess,
The character sum estimate with $r=3$.
\emph{J.\ London Math.\ Soc.\ (2)} 33 (1986), 219--226.

\bibitem{FordZahar}
K.~Ford and A.~Zaharescu,
On the distribution of imaginary parts of zeros
of the Riemann zeta function.
\emph{J.\ Reine Angew.\ Math.} 579 (2005), 145--158.

\bibitem{FordSoundZahar}
K.~Ford, K.~Soundararajan and A.~Zaharescu,
On the distribution of imaginary parts of zeros
of the Riemann zeta function.\ II.
\emph{Math.\ Ann.} 343 (2009), no.~3, 487--505.

\bibitem{Fujii}
A.~Fujii, On the zeros of Dirichlet $L$-functions (V).
\emph{Acta Arith.} 28 (1975/76), no.~4, 395--403.

\bibitem{FujiiLand1}
A.~Fujii,
On a theorem of Landau.
\emph{Proc.\ Japan Acad.\ Ser.\ A Math.\ Sci.} 65 (1989), no.~2, 51--54.

\bibitem{FujiiLand2}
A.~Fujii,
On a theorem of Landau.\ II.
\emph{Proc.\ Japan Acad.\ Ser.\ A Math.\ Sci.} 66 (1990), no.~9, 291--296.

\bibitem{Gonek}
S.~M.~Gonek, An explicit formula of Landau and its applications to the theory of the zeta-function, in \emph{A tribute to Emil Grosswald: number theory and related
analysis}, 395--413, Contemp.\ Math., 143, Amer.\ Math.\ Soc., Providence, RI, 1993.

\bibitem{Landau}
E.~Landau,
\"Uber die Nullstellen der Zetafunktion.
\emph{Math.\ Ann.} 71 (1912), no.~4, 548--564.

\bibitem{MontVau}
H.~L.~Montgomery and R.~C.~Vaughan,
\emph{Multiplicative number theory. I. Classical theory.}
Cambridge Studies in Advanced Mathematics, 97.
Cambridge University Press, Cambridge, 2007.

\bibitem{MurtyPerelli}
M.~R.~Murty and A.~Perelli,
The pair correlation of zeros of functions in the Selberg class.
\emph{Internat.\ Math.\ Res.\ Notices} (1999), no.~10, 531--545.

\bibitem{MurtyZahar}
M.~R.~Murty and A.~Zaharescu,
Explicit formulas for the pair correlation of zeros
of functions in the Selberg class.
\emph{Forum Math.} 14 (2002), no.~1, 65--83.

\bibitem{Ragun}
R.~Raghunathan,
A comparison of zeros of $L$-functions.
\emph{Math.\ Res.\ Lett.} 6 (1999), no.~2, 155--167.

\bibitem{SSS}
A.~Sourmelidis, T.~Srichan, and J.~Steuding,
On the vertical distribution of values of $L$-functions
in the Selberg class.
\emph{Int.\ J.\ Number Theory} 18 (2022), no.~2, 277--302.

\end{thebibliography}
\end{document}